\documentclass[12pt]{amsart}
\makeatletter
\let\@@pmod\pmod
\DeclareRobustCommand{\pmod}{\@ifstar\@pmods\@@pmod}
\def\@pmods#1{\mkern4mu({\operator@font mod}\mkern 6mu#1)}
\makeatother
\usepackage{fullpage,mathtools,amssymb}
\addtolength{\textheight}{7pt}
\usepackage{hyperref}
\newtheorem{theorem}{Theorem}
\newtheorem{lemma}[theorem]{Lemma}
\numberwithin{theorem}{section}
\numberwithin{equation}{section}
\newcommand{\C}{\mathbb{C}}
\newcommand{\A}{\mathbb{A}}
\newcommand{\Q}{\mathbb{Q}}
\newcommand{\R}{\mathbb{R}}

\renewcommand{\H}{\mathbb{H}}
\DeclareMathOperator{\GL}{GL}
\begin{document}
\title{A converse theorem without root numbers}
\author{Andrew R. Booker}
\thanks{The author was partially supported by EPSRC Grant {\tt EP/K034383/1}.}
\address{
Howard House\\
University of Bristol\\
Queens Ave\\
Bristol\\
BS8 1SN\\
United Kingdom
}
\email{\tt andrew.booker@bristol.ac.uk}
\begin{abstract}
We answer a challenge posed in \cite[\S1.3]{booker} by proving a version
of Weil's converse theorem \cite{weil} that assumes a functional equation
for character twists but allows their root numbers to vary arbitrarily.
\end{abstract}
\maketitle

\section{Introduction}
When Weil introduced his converse theorem \cite{weil}, he had in
mind what eventually became known as the Shimura--Taniyama conjecture
connecting elliptic curves over $\Q$ with classical modular forms. Soon
after, Weil's theorem was recast in representation-theoretic terms by
Jacquet and Langlands \cite[Theorem~11.3]{JL}, for whom the motivation
was Artin's conjecture, now seen as the prototypical case of Langlands'
functoriality \cite[\S12]{JL}. At the time, much more was known about
the analytic properties of Artin $L$-functions than of Hasse--Weil
$L$-functions (though as fate would have it, Shimura--Taniyama is now
a theorem, while some cases of Artin's conjecture for $2$-dimensional
representations over $\Q$ remain open). However, one hypothesis in the
converse theorem emerged as a sticking point in the way of easily applying
it to Artin $L$-functions, namely the behavior under twist of the root
number in the functional equation, which is tantamount to proving the
existence of local root numbers for Artin representations. Langlands
\cite{langlands-artin} solved this problem by a direct (i.e.\ local) but
very involved computation. Shortly after, Deligne \cite{deligne} gave a
simpler global proof by the method of ``stability of $\epsilon$-factors''.

In this paper we show that the issue could have been
circumvented\footnote{That is not to say that it should have been. As
Langlands makes clear in his commentary \cite{langlands-comments2}
and \cite{langlands-comments1}, the existence of local root
numbers for Artin representations was an important confirmation of the
nascent local theory.},
in the sense that knowledge of the root number is not needed in the
converse theorem. We also incorporate the method of \cite{bk-classical}
to allow the non-trivial twists to have arbitrary poles. Precisely,
we show the following:
\begin{theorem}\label{thm1}
Let $\xi$ be a Dirichlet character modulo $N$, $k$ a positive integer
satisfying $\xi(-1)=(-1)^k$, and $\{\lambda_n\}_{n=1}^\infty$ a sequence
of complex numbers satisfying $\lambda_n=O(\sqrt{n})$ and the Hecke
relations, so that
\begin{equation}\label{eq:ep}
\sum_{n=1}^\infty\lambda_nn^{-s}=
\prod_p\frac1{1-\lambda_pp^{-s}+\xi(p)p^{-2s}},
\qquad\text{with }\overline{\lambda_p}=\overline{\xi(p)}\lambda_p
\text{ for each prime }p\nmid N.
\end{equation}

For any primitive Dirichlet character $\chi$ of conductor $q$ coprime to
$N$, define
$$
\Lambda_\chi(s)=\Gamma_\C\bigl(s+\tfrac{k-1}2\bigr)
\sum_{n=1}^\infty\lambda_n\chi(n)n^{-s}
$$
for $s\in\C$ with $\Re(s)>\frac32$, where
$\Gamma_\C(s)=2(2\pi)^{-s}\Gamma(s)$. Suppose, for every such $\chi$,
that $\Lambda_\chi(s)$ continues to a meromorphic function on $\C$ and
satisfies the functional equation
\begin{equation}\label{eq:fe}
\Lambda_\chi(s)=\epsilon_\chi(Nq^2)^{\frac12-s}
\overline{\Lambda_\chi(1-\bar{s})},
\end{equation}
for some $\epsilon_\chi\in\C$ (necessarily of magnitude $1$).
Let $\mathbf{1}$ denote the character of modulus $1$, and
suppose that there is a nonzero polynomial $P$ such that
$P(s)\Lambda_{\mathbf{1}}(s)$ continues to an entire function of finite
order.

Then one of the following holds:
\begin{itemize}
\item[(i)]$k=1$ and there are primitive characters
$\xi_1\pmod*{N_1}$ and $\xi_2\pmod*{N_2}$ such that
$N_1N_2=N$, $\xi_1\xi_2=\xi$ and
$\lambda_n=\sum_{d\mid n}\xi_1(n/d)\xi_2(d)$ for every $n$.
\item[(ii)]
$\sum_{n=1}^\infty\lambda_nn^{\frac{k-1}2}e^{2\pi inz}$
is a normalized Hecke eigenform in $S_k^{\rm new}(\Gamma_0(N),\xi)$.
\end{itemize}
\end{theorem}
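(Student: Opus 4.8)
The plan is to adapt Weil's original argument \cite{weil}: build a candidate modular form $f$ out of the Dirichlet series, translate the functional equations into transformation properties of $f$, and then let the Hecke relations both fix the automorphy type and separate the cuspidal and Eisenstein alternatives. The whole art is to arrange matters so that the individual constants $\epsilon_\chi$ are never evaluated.

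First I would set $f(z)=\sum_{n\ge 1}\lambda_n n^{(k-1)/2}e^{2\pi i nz}$ on $\H$, which converges and has the expected growth thanks to $\lambda_n=O(\sqrt n)$, and record the usual Mellin dictionary: up to elementary factors $\Lambda_\chi(s)$ is the Mellin transform of the $\chi$-twist $f_\chi$ of $f$, and \eqref{eq:fe} is equivalent to a relation of the shape $f_\chi|_k W_{Nq^2}=c_\chi\,\widetilde f_{\bar\chi}$, where $\widetilde f$ is the form with coefficients $\overline{\lambda_n}$, $W_m$ denotes the Fricke involution at level $m$, and $c_\chi$ differs from the ``expected'' Gauss-sum root number only through the ratio $\epsilon_\chi/\epsilon_{\mathbf 1}$ and known unimodular factors. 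The hypothesis that $P(s)\Lambda_{\mathbf 1}(s)$ is entire of finite order controls $f$ near the cusp $0$; following \cite{bk-classical}, arbitrary poles of the nontrivial $\Lambda_\chi$ do no harm because those functions enter only through \eqref{eq:fe}.

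The heart of the matter is to upgrade these relations to a genuine modular transformation. Running Weil's interchange of summation --- summing $f_\chi|_k W_{Nq^2}=c_\chi\widetilde f_{\bar\chi}$ over all primitive $\chi$ of a fixed conductor $q$ coprime to $N$, weighted by $\chi(b)$, and undoing the twist --- expresses $f|_k\gamma$, for each $\gamma\in\Gamma_0(N)$, as an explicit finite linear combination of $f$ and its translates $z\mapsto f\bigl(\tfrac{z+j}{q}\bigr)$ with coefficients linear in the $c_\chi$. With the classical root-number formula these collapse to $\xi(d)\,f$; in our setting I must instead show that the spurious translates carry coefficient zero. The lever for this is the Euler product \eqref{eq:ep}: by multiplicativity of $\{\lambda_n\}$, together with the self-duality relation $\overline{\lambda_p}=\overline{\xi(p)}\lambda_p$ (which ensures $\widetilde f$ is itself a twist of $f$ rather than an unrelated object), the $\GL_2^+(\Q)$-span of $f$ should be forced to be finite dimensional, so that $f$ is a classical modular form on some congruence subgroup. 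I expect this passage --- from ``functional equations with unknown constants'' to ``a transformation law'' --- to be the main obstacle, and to require a careful combinatorial argument that genuinely uses the Hecke relations rather than the soft count just sketched, precisely because the analytic input here is a priori weaker than what Weil had at his disposal.

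Granting that, the rest is bookkeeping. Once $f$ is a classical modular form, its Dirichlet series being a Hecke-type Euler product forces $f$ to be a Hecke eigenform; comparing the degree-two Euler factor at each unramified $p$ with \eqref{eq:ep} shows that the nebentypus of $f$ induces the same primitive character as $\xi$, and the conductor $Nq^2$ in \eqref{eq:fe}, via Atkin--Lehner theory, pins the newform level of $f$ to $N$. Thus $f$ is a normalized newform in $M_k(\Gamma_0(N),\xi)$, either cuspidal --- giving alternative (ii) --- or Eisenstein, in which case $f$ is proportional to $E_k(\xi_1,\xi_2)$ with $\xi_1\xi_2=\xi$ and $\operatorname{cond}(\xi_1)\operatorname{cond}(\xi_2)=N$. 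The $n$th coefficient of the latter, in analytic normalization, equals $n^{-(k-1)/2}\sum_{d\mid n}\xi_1(n/d)\xi_2(d)d^{k-1}$, which grows too fast to satisfy $\lambda_n=O(\sqrt n)$ unless $k=1$; and when $k=1$ it is exactly $\sum_{d\mid n}\xi_1(n/d)\xi_2(d)$, which is alternative (i). This completes the argument.
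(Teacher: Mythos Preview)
Your outline correctly isolates the crux --- passing from ``functional equations with unknown $\epsilon_\chi$'' to an honest transformation law for $f$ --- but what you offer for that step is not a proof. The suggestion that multiplicativity forces the $\GL_2^+(\Q)$-span of $f$ to be finite dimensional is not a mechanism; you yourself flag it as a ``soft count'' and say you expect the real argument to be different. It is, and the missing idea is quite specific.

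The paper's device is a bootstrap. First, among the characters mod a prime $q\nmid N$ one includes the \emph{trivial} character, whose twist is governed by the Ramanujan sum $c_q(n)$; a short Euler-product computation (Lemma~2.1) shows that $\Lambda_{c_q}(s)/\Lambda_{\mathbf 1}(s)$ is an explicit Dirichlet polynomial with a functional equation of level $q^2$ and \emph{known} constant $\xi(q)$, so this piece carries no mystery root number. Averaging your Weil-type identity over all $\chi\pmod q$ then shows that $f|\gamma_{q,b}$ has a Fourier expansion with $n$th coefficient $f_nS_q(bn)$, where $S_q$ is an explicit exponential sum in the unknown $\epsilon_\chi$'s. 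Now comes the bootstrap: choosing $q\equiv 1\pmod N$ prime, the matrix $\gamma_{q,1}$ already lies in the group generated by $\begin{psmallmatrix}1&1\\0&1\end{psmallmatrix}$ and $\begin{psmallmatrix}1&0\\N&1\end{psmallmatrix}$, so $f|\gamma_{q,1}=f$ is known \emph{a priori}. Comparing Fourier coefficients forces $S_q(x)=1$ for every residue $x$ hit by some $n$ with $f_n\ne 0$, and a separate nonvanishing lemma (Lemma~2.3, itself proved by playing the functional equations off against each other and using the Euler product at $q$ and $q^2$) guarantees that every residue class is hit. Hence $S_q\equiv 1$, whence $f|\gamma_{q,b}=f$ for all $b$, and $\Gamma_1(N)$-invariance follows. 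The nebentypus is then read off from the trivial-character case of the same identity. None of this is visible in your sketch; in particular, the nonvanishing input is essential and is exactly where the Euler product is used in earnest.

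Two smaller points. You assert that poles of the nontrivial $\Lambda_\chi$ ``do no harm because those functions enter only through \eqref{eq:fe}''; in fact Hecke's Mellin argument needs $\Lambda_\chi$ entire of finite order to produce the transformation $f_\chi|W_{Nq^2}=\cdots$, and the paper invokes a prior result (Lemma~2.2) to secure this for prime-conductor twists. And your endgame (``Atkin--Lehner theory pins the level to $N$'') is too breezy: once $f\in M_k(\Gamma_0(N),\xi)$, one still needs a Li-type primitivity statement that covers Eisenstein series (Lemma~2.4) before the growth argument you give can finish the job.
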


The result can also be stated in representation-theoretic terms, as
follows.
\begin{theorem}\label{thm2}
Let $\A_\Q$ denote the ad\`ele ring of $\Q$, and
let $\pi=\pi_\infty\otimes\bigotimes_{v<\infty}\pi_v$ be an irreducible
admissible representation of $\GL_2(\A_\Q)$ with automorphic central
character and conductor $N$.
Assume that each $\pi_v$ is unitary and that $\pi_\infty$ is
a discrete series or limit of discrete series representation.
For each unitary id\`ele class character $\omega$ of conductor $q$
coprime to $N$, suppose that the complete $L$-functions
$$
\Lambda(s,\pi\otimes\omega)=\prod_vL(s,\pi_v\otimes\omega_v)
\quad\text{and}\quad
\Lambda(s,\widetilde{\pi}\otimes\omega^{-1})
=\prod_vL(s,\widetilde{\pi}_v\otimes\omega_v^{-1}),
$$
defined initially for $\Re(s)>\frac32$, continue to meromorphic
functions on $\C$ and satisfy a functional equation of the form
$$
\Lambda(s,\pi\otimes\omega)=\epsilon_\omega(Nq^2)^{\frac12-s}
\Lambda(1-s,\widetilde{\pi}\otimes\omega^{-1})
$$
for some complex number $\epsilon_\omega$.
Suppose also that there is a nonzero polynomial $P$
such that $P(s)\Lambda(s,\pi)$ continues to an entire function of
finite order. Then there is an automorphic representation
$\Pi=\bigotimes_v\Pi_v$ which is either cuspidal or an isobaric
sum of unitary id\`ele class characters, and satisfies
$\pi_v\cong\Pi_v$ for $v=\infty$
and every finite $v$ at which $\pi_v$ is unramified.
\end{theorem}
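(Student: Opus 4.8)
The plan is to deduce Theorem~\ref{thm2} from Theorem~\ref{thm1} through the classical--adelic dictionary for $\GL_2/\Q$, so the work is largely bookkeeping: unpack the adelic data of $\pi$ into the form demanded by Theorem~\ref{thm1}, apply it, and translate the conclusion back. First, the central character $\omega_\pi$ is unitary (each $\pi_v$ being unitary) and automorphic, so its finite part corresponds to a Dirichlet character $\xi$ modulo $N$; since $\pi_\infty$ is a discrete series (weight $k\ge2$) or a limit of discrete series ($k=1$), we obtain a positive integer $k$, and triviality of $\omega_\pi$ on $\Q^\times$ against the sign of $\omega_{\pi,\infty}$ forces $\xi(-1)=(-1)^k$. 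Writing $L(s,\pi)=\sum_{n\ge1}\lambda_n n^{-s}$ in the analytic normalization, the local $L$-factors assemble into an Euler product of exactly the shape \eqref{eq:ep} (degree $2$ at $p\nmid N$, with $\lambda_p=\alpha_p+\beta_p$ and $\xi(p)=\alpha_p\beta_p$; degree $\le1$ at $p\mid N$, consistent with $\xi(p)=0$); one has $\overline{\lambda_p}=\overline{\xi(p)}\lambda_p$ for $p\nmid N$ because $\widetilde\pi\cong\overline\pi$ by unitarity, and $\{\lambda_n\}$ obeys the growth bound of Theorem~\ref{thm1} from the unitarity bounds on the Satake parameters. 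The conductor is $N$ by hypothesis.

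Next I match the twists. Given a primitive Dirichlet character $\chi$ of conductor $q$ coprime to $N$, lift it to a unitary id\`ele class character $\omega$ of conductor $q$. Then $\Lambda(s,\pi\otimes\omega)=\Lambda_\chi(s)$, once one checks that the archimedean factor $L(s,\pi_\infty\otimes\omega_\infty)$ of a weight-$k$ (limit of) discrete series equals $\Gamma_\C(s+\tfrac{k-1}2)$. Using $\widetilde\pi\cong\overline\pi$ and that $\Gamma_\C$ is real on $\R$, one finds $\Lambda(1-s,\widetilde\pi\otimes\omega^{-1})=\overline{\Lambda(1-\bar s,\pi\otimes\omega)}=\overline{\Lambda_\chi(1-\bar s)}$, so the functional equation assumed in Theorem~\ref{thm2} is precisely \eqref{eq:fe}, with $\epsilon_\omega=\epsilon_\chi$. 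The finite-order hypothesis on $P(s)\Lambda(s,\pi)$ is verbatim that on $P(s)\Lambda_{\mathbf 1}(s)$. Hence every hypothesis of Theorem~\ref{thm1} holds.

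Applying Theorem~\ref{thm1}: in case~(ii) we obtain a normalized newform $g\in S_k^{\mathrm{new}}(\Gamma_0(N),\xi)$, and we take $\Pi$ to be the cuspidal automorphic representation it generates; in case~(i), $k=1$, and we take $\Pi=\omega_1\boxplus\omega_2$, the isobaric sum of the unitary id\`ele class characters lifting $\xi_1$ and $\xi_2$. Thus $\Pi$ is cuspidal in the first case and an isobaric sum of unitary id\`ele class characters in the second. In either case $\Pi_\infty$ is the weight-$k$ (limit of) discrete series --- for case~(i), because $\xi(-1)=-1$ makes exactly one of $\xi_1,\xi_2$ odd, so the principal series induced from $(\omega_{1,\infty},\omega_{2,\infty})$ is the weight-one limit of discrete series --- so $\Pi_\infty\cong\pi_\infty$. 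And at each finite place $v=p$ with $\pi_p$ unramified we have $p\nmid N$, so $\Pi_p$ is the unramified principal series with Satake parameters $\{\alpha_p,\beta_p\}$ determined by $\alpha_p+\beta_p=\lambda_p$, $\alpha_p\beta_p=\xi(p)$, which are those of $\pi_p$; hence $\Pi_p\cong\pi_p$. Nothing is claimed, or needed, at the ramified finite places.

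The one step I expect to require genuine care is the identification of the two functional equations in the second paragraph: one must be scrupulous about complex conjugation versus contragredient --- exactly where the local unitarity hypothesis enters, via $\widetilde\pi\cong\overline\pi$ --- about the precise archimedean $L$- and $\varepsilon$-factors of a limit of discrete series, and about the bad Euler factors at $p\mid N$, though these last touch neither \eqref{eq:ep} nor the matching since $\xi(p)=0$ there and no local isomorphism is asserted. All the analytic substance, and the real difficulty, is contained in Theorem~\ref{thm1} itself.
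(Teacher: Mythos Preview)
Your approach is exactly what the paper intends: it states Theorem~\ref{thm2} as the representation-theoretic reformulation of Theorem~\ref{thm1} and gives no separate proof, so the reduction via the classical--adelic dictionary that you outline is the argument. One small point to watch is that local unitarity of the $\pi_p$ gives $|\alpha_p|,|\beta_p|<p^{1/2}$ but not uniformly, so strictly speaking you get $\lambda_n=O_\varepsilon(n^{1/2+\varepsilon})$ rather than $O(\sqrt{n})$; this is harmless, since the only place the sharp bound is used in the proof of Theorem~\ref{thm1} is to exclude Eisenstein series for $k\ge 2$, and those are already ruled out here because their Satake parameters have $|\alpha_p|=p^{(k-1)/2}\ge p^{1/2}$, violating local unitarity directly.
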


Many extensions and variations of the hypotheses of the two theorems
are possible. We mention a few:
\begin{enumerate}
\item The restriction to discrete series representations
was made for convenience and could be removed with more work.
It is also likely possible to formulate a version over number fields,
starting along the lines of \cite{bk1,bk2}.
\item The assumptions that $\lambda_n=O(\sqrt{n})$ in Theorem~\ref{thm1}
and that $\pi_v$ be unitary in Theorem~\ref{thm2} could be relaxed to
polynomial growth of the Satake parameters, at the expense of allowing
solutions corresponding to Eisenstein series of higher weight. Since we
have allowed the untwisted $L$-function to have finitely many poles,
that would include the Eisenstein series of weight $2$ and level $1$
(which is not modular), as in \cite{bk-classical}.
\item If we assume that the twisted $L$-functions are entire then,
using the method of \cite{diaconu}, it is enough to assume the
functional equation for the trivial character and characters of a single
well-chosen prime conductor $q$ (depending on $N$).
As shown in \cite[Theorem~2.5]{workshop}, the set of suitable $q$
has density $1$ in the set of all primes.
\item Our proof makes use of the Euler product, an idea that originates
with Conrey and Farmer \cite{cf}. It is not required in Weil's original
converse theorem, thanks to an abundance of twists, and one might ask
whether it is possible to eliminate both the root numbers and Euler
product. It is plausible that the answer is no, as the analogous question
for additive twists has a negative answer, as shown in \cite{steiner}.
However, adopting the philosophy espoused in \cite{workshop}, it is
likely possible to linearize the Euler product, replacing it by the
functional equation under twist by Ramanujan sums, $c_q(n)$.
\end{enumerate}
Finally, we note that stability of $\epsilon$-factors for more general
reductive groups and representations of their $L$-groups remains an
active area of research (see \cite{cst} for a recent survey), motivated
in part by applications involving converse theorems for $\GL_n$.
It would be interesting to understand the extent to which
our result can be generalized to higher rank.

\section{Lemmas}
We begin with a few preparatory lemmas. We assume the notation and
hypotheses of Theorem~\ref{thm1} throughout.

\begin{lemma}\label{lem:cq}
Let $q\nmid N$ be a prime number, and let
$$
c_q(n)=\sum_{\substack{a\pmod*{q}\\(a,q)=1}}e\!\left(\frac{an}{q}\right)
$$
be the associated Ramanujan sum, where $e(x)=e^{2\pi i x}$. Define
$$
\Lambda_{c_q}(s)=\Gamma_\C\bigl(s+\tfrac{k-1}2\bigr)
\sum_{n=1}^\infty\lambda_nc_q(n)n^{-s}.
$$
Then the ratio $D_q(s)=\Lambda_{c_q}(s)/\Lambda_{\mathbf{1}}(s)$ is a Dirichlet
polynomial satisfying the functional equation
$$
D_q(s)=\xi(q)q^{1-2s}\overline{D_q(1-\bar{s})}.
$$
\end{lemma}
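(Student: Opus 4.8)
The plan is to compute $D_q(s)$ in closed form and then check the functional equation by a direct substitution; notably, the functional equations \eqref{eq:fe} for the twisted $L$-functions will play no role. First I would record the elementary evaluation of the Ramanujan sum for prime $q$: one has $c_q(n)=q-1$ when $q\mid n$ and $c_q(n)=-1$ when $q\nmid n$. In particular $|c_q(n)|\le q-1$, so $\sum_n\lambda_nc_q(n)n^{-s}$ converges absolutely for $\Re(s)>\frac32$, exactly where $L(s):=\sum_n\lambda_nn^{-s}$ does. All identities below are derived in that half-plane and then extended by meromorphic continuation, and the $\Gamma$-factor $\Gamma_\C\bigl(s+\tfrac{k-1}2\bigr)$ common to $\Lambda_{c_q}$ and $\Lambda_{\mathbf 1}$ cancels in the ratio, so that $D_q(s)=\bigl(\sum_n\lambda_nc_q(n)n^{-s}\bigr)/L(s)$, which makes sense since $L\not\equiv 0$ (indeed $\lambda_1=1$).

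Next I would strip the Euler factor at $q$. The Euler product \eqref{eq:ep} gives
$$
\sum_{q\nmid n}\lambda_nn^{-s}=\prod_{p\ne q}\frac{1}{1-\lambda_pp^{-s}+\xi(p)p^{-2s}}=\bigl(1-\lambda_qq^{-s}+\xi(q)q^{-2s}\bigr)L(s),
$$
hence $\sum_{q\mid n}\lambda_nn^{-s}=L(s)-\sum_{q\nmid n}\lambda_nn^{-s}=\bigl(\lambda_qq^{-s}-\xi(q)q^{-2s}\bigr)L(s)$. Since $c_q(n)=q-1$ for $q\mid n$ and $c_q(n)=-1$ otherwise, the $c_q$-twisted series equals
$$
\sum_{n=1}^\infty\lambda_nc_q(n)n^{-s}=-L(s)+q\sum_{q\mid n}\lambda_nn^{-s}=\bigl(-1+\lambda_qq^{1-s}-\xi(q)q^{1-2s}\bigr)L(s),
$$
so $D_q(s)=-1+\lambda_qq^{1-s}-\xi(q)q^{1-2s}$, manifestly a Dirichlet polynomial (supported on $n\in\{1,q,q^2\}$).

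Finally I would verify the functional equation, now a one-line computation. Since $q$ is a positive integer, $\overline{q^{\bar s}}=q^s$, so $\overline{D_q(1-\bar s)}=-1+\overline{\lambda_q}\,q^s-\overline{\xi(q)}\,q^{2s-1}$, and multiplying by $\xi(q)q^{1-2s}$ produces $-\xi(q)q^{1-2s}+\xi(q)\overline{\lambda_q}\,q^{1-s}-|\xi(q)|^2$. Because $q\nmid N$, the value $\xi(q)$ is a root of unity, so $|\xi(q)|^2=1$; and the relation $\overline{\lambda_q}=\overline{\xi(q)}\lambda_q$ in \eqref{eq:ep}, multiplied through by $\xi(q)$, says precisely $\xi(q)\overline{\lambda_q}=\lambda_q$. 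Substituting, $\xi(q)q^{1-2s}\overline{D_q(1-\bar s)}=-1+\lambda_qq^{1-s}-\xi(q)q^{1-2s}=D_q(s)$. I do not expect a genuine obstacle here — the lemma is essentially a bookkeeping exercise — but the points to handle carefully are keeping the Dirichlet-series manipulations inside the region of absolute convergence before continuing, and matching up the self-duality hypotheses on $\lambda_q$ and $\xi(q)$ at the end. It is worth emphasizing that the symmetry of $D_q$ follows from the Hecke relations and the relation $\overline{\lambda_p}=\overline{\xi(p)}\lambda_p$ alone; this built-in self-duality is what will later allow $c_q$-twists to stand in for character twists whose root numbers are unknown.
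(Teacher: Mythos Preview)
Your argument is correct and follows essentially the same route as the paper: both compute $D_q(s)=-1+\lambda_qq^{1-s}-\xi(q)q^{1-2s}$ explicitly from the Euler product (the paper writes the Ramanujan sum as $c_q(n)=q-1-q\chi_0(n)$, which is just your case split in disguise) and then verify the functional equation directly from $\overline{\lambda_q}=\overline{\xi(q)}\lambda_q$.
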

\begin{proof}
This holds more generally for positive integers $q$ coprime to $N$, as
shown in \cite[Lemma~4.12]{workshop}. For completeness, we prove the claim
for prime values of $q$.
Let $\chi_0$ denote the trivial character mod $q$. Then a
straightforward calculation shows that
$c_q(n)=q-1-q\chi_0(n)$, so that
$$
\begin{aligned}
D_q(s)&=
\frac{\Lambda_{c_q}(s)}{\Lambda_{\mathbf{1}}(s)}
=q-1-\frac{q\sum_{n=1}^\infty\lambda_n\chi_0(n)n^{-s}}
{\sum_{n=1}^\infty\lambda_nn^{-s}}
=q-1-q(1-\lambda_qq^{-s}+\xi(q)q^{-2s})\\
&=-1+\lambda_qq^{1-s}-\xi(q)q^{1-2s},
\end{aligned}
$$
by \eqref{eq:ep}.
Hence
$$
\begin{aligned}
\xi(q)q^{1-2s}\overline{D_q(1-\bar{s})}
&=\xi(q)q^{1-2s}\bigl(-1+\overline{\lambda_q}
q^s-\overline{\xi(q)}q^{2s-1}\bigr)
=-1+\xi(q)\overline{\lambda_q}q^{1-s}-\xi(q)q^{1-2s}\\
&=D_q(s),
\end{aligned}
$$
since $\overline{\lambda_q}=\overline{\xi(q)}\lambda_q$,
by \eqref{eq:ep}.
\end{proof}

\begin{lemma}\label{lem:bk}
\hspace{1pt}
\begin{enumerate}
\item
$\Lambda_\chi(s)$ is entire of finite order for every primitive character
$\chi$ of prime conductor $q\nmid N$.
\item
$\Lambda_{\mathbf{1}}(s)$ is
entire apart from at most simple poles at $s\in\{0,1\}$ for $k=1$ and
$s\in\{-\frac12,\frac12,\frac32\}$ for $k=2$.
\end{enumerate}
\begin{proof}
These follow from the proof of \cite[Theorem~1.1]{bk-classical}.
Although the statement of that theorem includes a formula for the root number,
we verify that
no use of that hypothesis is made until \S3.1, where Weil's converse
theorem is applied. Thus we find that (1) holds and that
$\Lambda_{\mathbf{1}}(s)$ is entire apart from at most simple poles at
$s\in\{\frac{1\pm k}2\}$ for $k\ne2$ and
$s\in\{-\frac12,\frac12,\frac32\}$ for $k=2$.
Finally, the estimate $\lambda_n=O(\sqrt{n})$, together with the
functional equation \eqref{eq:fe}, rules out poles in the case $k>2$.
\end{proof}
\end{lemma}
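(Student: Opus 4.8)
The plan is to import the analytic work of \cite{bk-classical} almost verbatim, being scrupulous about which hypotheses it actually uses. The proof of \cite[Theorem~1.1]{bk-classical} takes as input exactly a sequence $\{\lambda_n\}$ obeying the Hecke relations \eqref{eq:ep}, the meromorphic continuation and functional equation \eqref{eq:fe} for every primitive twist of conductor coprime to $N$, and the assumption that $P(s)\Lambda_{\mathbf{1}}(s)$ is entire of finite order. From this it deduces, by combining Phragm\'en--Lindel\"of convexity in vertical strips with contour shifts, a Hecke-type transformation argument, and the Euler product relation at $q$ (as in Lemma~\ref{lem:cq}), that the completed twisted $L$-functions of prime conductor are entire of finite order and that the possible poles of $\Lambda_{\mathbf{1}}(s)$ lie among finitely many half-integers depending only on $k$. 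I would quote this, having first checked line by line that the explicit root-number formula recorded in \cite{bk-classical} is never invoked before \S3.1 of that paper, where Weil's converse theorem is applied to conclude modularity; everything the lemma needs is established before that point, yielding part~(1) together with the provisional form of part~(2): $\Lambda_{\mathbf{1}}(s)$ is entire apart from at most simple poles at $s\in\{\tfrac{1\pm k}{2}\}$ when $k\neq2$ and at $s\in\{-\tfrac12,\tfrac12,\tfrac32\}$ when $k=2$.

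It then remains only to eliminate the two candidate poles when $k>2$, and here I would use the growth hypothesis directly rather than anything from \cite{bk-classical}. Since $\lambda_n=O(\sqrt n)$, the Dirichlet series $\sum_n\lambda_n n^{-s}$ converges absolutely, hence is holomorphic, for $\Re(s)>\tfrac32$; and $\Gamma_\C(s+\tfrac{k-1}{2})$ has no poles in that half-plane when $k\geq1$, so $\Lambda_{\mathbf{1}}(s)$ is holomorphic for $\Re(s)>\tfrac32$. Applying \eqref{eq:fe} with $\chi=\mathbf{1}$ reflects this to holomorphy for $\Re(s)<-\tfrac12$. When $k>2$ the candidate poles satisfy $\tfrac{1+k}{2}>\tfrac32$ and $\tfrac{1-k}{2}<-\tfrac12$, so both lie strictly inside regions where $\Lambda_{\mathbf{1}}$ has already been shown holomorphic; hence $\Lambda_{\mathbf{1}}$ is entire in that case. (For $k=2$ the candidate poles sit exactly on the boundary lines $\Re(s)=\tfrac32$ and $\Re(s)=-\tfrac12$, and the extra possible pole at $\tfrac12$ is untouched as well, which is why $k=2$ is kept separate; for $k=1$ the set $\{\tfrac{1\pm k}{2}\}$ already equals $\{0,1\}$.) This is precisely the list asserted.

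I expect the first step to be the main obstacle---not because it is deep, but because it is a careful audit. The functional equation \eqref{eq:fe} used here carries an unknown constant $\epsilon_\chi$ of modulus $1$, whereas \cite[Theorem~1.1]{bk-classical} posits a specific value for it, so one must confirm that every estimate in the pole-counting argument---each Phragm\'en--Lindel\"of bound, each contour shift, each orthogonality manipulation among the additive twists---uses only $|\epsilon_\chi|=1$ and the symmetry relating $\Lambda_\chi(s)$ to $\overline{\Lambda_\chi(1-\bar{s})}$, never the value of $\epsilon_\chi$ itself. Granting that verification, the remainder is the short self-contained argument above.
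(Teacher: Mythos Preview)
Your proposal is correct and follows essentially the same approach as the paper: both invoke the proof of \cite[Theorem~1.1]{bk-classical}, noting that the explicit root-number formula is not used there until \S3.1, to obtain part~(1) and the provisional pole set for $\Lambda_{\mathbf{1}}(s)$, and then eliminate the candidate poles for $k>2$ via the absolute convergence from $\lambda_n=O(\sqrt{n})$ together with the functional equation. Your write-up merely spells out this last step in slightly more detail than the paper does.
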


\begin{lemma}\label{lem:nonvanishing}
For any prime $q\nmid N$ and any integer $a$,
there is a positive integer $n\equiv a\pmod*{q}$ such that
$\lambda_n\ne0$.
\end{lemma}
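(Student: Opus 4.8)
The plan is to argue by contradiction, supposing that $\lambda_n=0$ for every positive integer $n\equiv a\pmod*{q}$. If $q\mid a$ this is immediate to refute: either $\lambda_q\neq0$, or $\lambda_q=0$ and then $\lambda_{q^2}=\lambda_q^2-\xi(q)=-\xi(q)\neq0$ since $q\nmid N$; in either case $q$ or $q^2$ is a witness. So I may assume $(a,q)=1$; in particular $q$ is odd, the case $q=2$ being trivial because then $a\equiv1$ and $\lambda_1=1$.

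The first step uses multiplicativity: if $m$ is coprime to $q$ with $\lambda_m\neq0$, then for every prime $p\nmid Nqm$ with $mp\equiv a\pmod*{q}$ one has $\lambda_{mp}=\lambda_m\lambda_p=0$, hence $\lambda_p=0$. Combining this with Dirichlet's theorem on primes in arithmetic progressions, and bootstrapping — using that whenever $\lambda_p=0$ one gets $\lambda_{p^2}=-\xi(p)\neq0$, so the class of $p^2$ always carries a nonzero coefficient — one shows the following structural fact. Writing $\mathcal O$ for the set of residue classes modulo $q$ that contain some prime $p\nmid Nq$ with $\lambda_p\neq0$, the subgroup $\langle\mathcal O\rangle$ is proper; then, since the classes outside $\langle\mathcal O\rangle$ are not contained in any proper subgroup (a group is not the union of two proper subgroups), they generate $(\Z/q\Z)^\times$, so their squares generate $(\Z/q\Z)^{\times 2}$, and one deduces $\mathcal O\subseteq(\Z/q\Z)^{\times 2}$. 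Equivalently, $\lambda_p=0$ for every prime $p\nmid Nq$ that is a quadratic nonresidue modulo $q$.

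With this in hand the conclusion follows by a short computation. Let $\chi_q$ be the quadratic character modulo $q$; it is primitive of conductor $q$ coprime to $N$. The structural fact gives $\lambda_p\chi_q(p)=\lambda_p$ and $\chi_q(p)^2=1$ for all $p\nmid Nq$, so by \eqref{eq:ep} the series $\sum\lambda_n n^{-s}$ and $\sum\lambda_n\chi_q(n)n^{-s}$ have the same Euler factor at every $p\nmid Nq$; since the $q$-factor of the former is $(1-\lambda_q q^{-s}+\xi(q)q^{-2s})^{-1}=:P_q(s)^{-1}$ while that of the latter is $1$ (because $\chi_q(q)=0$), dividing yields
$$
R(s):=\frac{\Lambda_{\chi_q}(s)}{\Lambda_{\mathbf{1}}(s)}=P_q(s)\prod_{p\mid N}\frac{1-\lambda_p p^{-s}}{1-\lambda_p\chi_q(p)p^{-s}}.
$$
Since $\Lambda_{\mathbf{1}}\not\equiv0$, its Dirichlet series tending to $\lambda_1=1$ as $\Re(s)\to\infty$, substituting the functional equations \eqref{eq:fe} for $\Lambda_{\chi_q}$ (conductor $q$) and $\Lambda_{\mathbf{1}}$ (conductor $1$) into this identity produces $R(s)=c\,q^{1-2s}\,\overline{R(1-\bar s)}$ with $|c|=1$. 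Now $P_q(s)$ is a polynomial in $q^{-s}$ while the product over $p\mid N$ is a rational function of the $p^{-s}$; as $q\nmid N$ these variables are algebraically independent, so the two factors of $R$ separate in the functional equation and $P_q(s)=\kappa\,q^{1-2s}\,\overline{P_q(1-\bar s)}$ for some constant $\kappa$. Comparing the constant terms of the two sides gives $\kappa=q\,\xi(q)$, and comparing the coefficients of $q^{-2s}$ gives $\xi(q)=\kappa\,q$; hence $q^{2}=1$, which is absurd.

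I expect the combinatorial argument of the second paragraph to be the main obstacle. The difficulty is that a residue class might contain only finitely many integers $n$ with $\lambda_n\neq0$, so in realizing elements of $\langle\mathcal O\rangle$ as residue classes of products of primes, and in checking that $H:=\{\overline m:(m,q)=1,\ \lambda_m\neq0\}$ is closed under the relevant multiplications, one must arrange coprime witnesses; it is precisely the squares $p^2$ of primes with $\lambda_p=0$ that supply the extra distinct witnesses needed, and one must verify they suffice. (Running the computation of the third paragraph for every character $\psi$ modulo $q$ trivial on $H$ does not obviously bypass this, since $\psi^2$ must be trivial for the Euler factors to match, so one is led back to locating $H$ inside the index-$2$ subgroup of squares.) Everything else — the reduction to the quadratic twist and the functional-equation computation forcing $q^2=1$ — is routine once the structural fact is established.
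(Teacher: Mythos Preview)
Your route is viable but much more involved than the paper's. The paper bypasses all combinatorics: writing the indicator of the class $a\pmod q$ as
\[
\frac1q-\frac1{q(q-1)}c_q(n)+\frac1{q-1}\sum_{\chi\ne\chi_0}\overline{\chi(a)}\chi(n),
\]
the vanishing hypothesis gives a linear relation among $\Lambda_{\mathbf 1}$, $\Lambda_{c_q}$, and the $\Lambda_\chi$. Applying the functional equations (using Lemma~\ref{lem:cq} for $\Lambda_{c_q}$) and comparing Dirichlet coefficients at $n=q$ forces $\lambda_q=0$; then comparing at $n=q^2$ (where now $\lambda_{q^2}=-\xi(q)$) forces $q=-1$. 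No structural information about which primes carry nonzero $\lambda_p$ is ever used, and no reduction to a quadratic twist is needed.

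As you suspected, the second paragraph is where your argument is incomplete. The assertion that $\langle\mathcal O\rangle$ is proper is unjustified as stated: a single prime witness in a class does not let you form coprime products. The fix is to work with $\mathcal O'=\{c:\text{infinitely many primes }p\equiv c\text{ have }\lambda_p\ne0\}$, for which $\langle\mathcal O'\rangle\subseteq H$ is immediate by picking distinct prime witnesses. For $c\notin\langle\mathcal O'\rangle$ almost all primes $p\equiv c$ have $\lambda_p=0$, giving pairwise coprime witnesses $p^2$ for $c^2$; since such $c$ generate $(\Z/q\Z)^\times$, this yields $(\Z/q\Z)^{\times2}\subseteq H$ with witnesses avoiding any prescribed finite set, hence $\langle\mathcal O'\rangle\cdot(\Z/q\Z)^{\times2}\subseteq H\subsetneq(\Z/q\Z)^\times$, forcing $\langle\mathcal O'\rangle\subseteq(\Z/q\Z)^{\times2}$. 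Finally, if some nonresidue class $c$ contained a prime $p_0$ with $\lambda_{p_0}\ne0$, then since $a\notin H\supseteq(\Z/q\Z)^{\times2}$ is also a nonresidue, $c^{-1}a$ is a residue and has a witness $m$ coprime to $p_0$, whence $p_0m\equiv a$ with $\lambda_{p_0m}\ne0$, contradiction. So your structural fact is true and your third paragraph then works; but the paper's two coefficient comparisons replace all of this.
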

\begin{proof}
Suppose the conclusion is false for some $q$ and $a$. If $a\equiv0\pmod*{q}$
then we must have $\lambda_q=0$, but then the Euler product \eqref{eq:ep}
implies that $\lambda_{q^2}=-\xi(q)\ne0$. Hence we may assume that $(a,q)=1$.

Letting $\chi_0$
denote the trivial character mod $q$, we have
$\chi_0(n)=\frac{q-1-c_q(n)}{q}$, and thus
$$
\frac1{q}-\frac1{q(q-1)}c_q(n)+\frac1{q-1}
\sum_{\substack{\chi\pmod*{q}\\\chi\ne\chi_0}}\overline{\chi(a)}\chi(n)
$$
is the indicator function of the residue class of $a$. Hence, by
hypothesis we have
$$
\Lambda_{\mathbf{1}}(s)
=\frac1{q-1}\Lambda_{c_q}(s)-\frac{q}{q-1}
\sum_{\substack{\chi\pmod*{q}\\\chi\ne\chi_0}}\overline{\chi(a)}
\Lambda_\chi(s).
$$
Applying the functional equation and making use of Lemma~\ref{lem:cq},
this implies that
$$
\epsilon_{\mathbf{1}}N^{\frac12-s}\overline{\Lambda_{\mathbf{1}}(1-\bar{s})}
=\frac{\epsilon_{\mathbf{1}}\xi(q)}{q-1}(Nq^2)^{\frac12-s}
\overline{\Lambda_{c_q}(1-\bar{s})}
-\frac{q}{q-1}(Nq^2)^{\frac12-s}
\sum_{\substack{\chi\pmod*{q}\\\chi\ne\chi_0}}\overline{\chi(a)}
\epsilon_\chi\overline{\Lambda_\chi(1-\bar{s})}.
$$
Multiplying both sides by $\overline{\epsilon_{\mathbf{1}}}(Nq^2)^{s-\frac12}$,
replacing $s$ by $1-\bar{s}$ and conjugating, we obtain
$$
q^{1-2s}\Lambda_{\mathbf{1}}(s)
=\frac{\overline{\xi(q)}}{q-1}\Lambda_{c_q}(s)
-\frac{q}{q-1}
\sum_{\substack{\chi\pmod*{q}\\\chi\ne\chi_0}}\chi(a)
\epsilon_{\mathbf{1}}\overline{\epsilon_\chi}
\Lambda_\chi(s).
$$
Comparing the Dirichlet coefficients of both sides at $q$, 
we see that $\lambda_q=0$. In turn, as above, this implies that
$\lambda_{q^2}=-\xi(q)$. Comparing coefficients at $q^2$, we thus have
$q=-1$, which is absurd. This concludes the proof.
\end{proof}

In \cite[Theorem~9]{li}, Li proved that a cuspform whose $L$-function
satisfies both the Euler product \eqref{eq:ep} and the functional equation
\eqref{eq:fe} for $\chi=\mathbf{1}$ must be primitive. Our final result
of this section constitutes an extension of Li's result that includes
the Eisenstein series.
\begin{lemma}\label{lem:newform}
Let $f\in M_k(\Gamma_0(N),\xi)$ have Fourier expansion
$\sum_{n=0}^\infty f_ne(nz)$, and assume that it is a
normalized eigenfunction for the full Hecke algebra, so that
$$
\sum_{n=1}^\infty f_nn^{-s-\frac{k-1}2}
=\prod_p\frac1{1-f_pp^{-s-\frac{k-1}2}+\xi(p)p^{-2s}}.
$$
Let
$$
\Lambda_f(s)=\Gamma_\C\bigl(s+\tfrac{k-1}2\bigr)
\sum_{n=1}^\infty f_nn^{-s-\frac{k-1}2}
$$
be the associated complete $L$-function, and assume that it
satisfies the functional equation
$$
\Lambda_f(s)=\epsilon N^{\frac12-s}\overline{\Lambda_f(1-\bar{s})}
$$
for some $\epsilon\in\C$.
Then one of the following holds:
\begin{itemize}
\item[(i)]$f$ is a primitive cuspform, i.e.\ a normalized Hecke eigenform in
$S_k^{\rm new}(\Gamma_0(N),\xi)$.
\item[(ii)]There are Dirichlet characters $\xi_1\pmod*{N_1}$ and
$\xi_2\pmod*{N_2}$ such that $\xi_1$ is primitive,
$N_1N_2=N$, $\xi_1\xi_2=\xi$ and
$$
f_n=\sum_{d\mid n}\xi_1(n/d)\xi_2(d)d^{k-1}
$$
for every $n>0$. If $k\ne2$ then $\xi_2$ is primitive. If $k=2$ then
$\xi_2$ need not be primitive (and in fact it must be imprimitive if
$\xi_1$ and $\xi_2$ are both trivial), but if $N_2^*$ denotes the conductor of
$\xi_2$ then $N_2/N_2^*$ is squarefree and $(N_2/N_2^*,N_2^*)=1$.
\end{itemize}
\end{lemma}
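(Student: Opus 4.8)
The plan is to split into the cases that $f$ is a cusp form or an Eisenstein series, the former being known and the latter the new content. Since $f$ is an eigenfunction for the full Hecke algebra and $M_k(\Gamma_0(N),\xi)=S_k(\Gamma_0(N),\xi)\oplus\mathcal{E}_k(\Gamma_0(N),\xi)$ is a decomposition into Hecke-stable subspaces, $f$ lies in one of them. If $f$ is cuspidal, then its $L$-function has an Euler product of the stated shape and satisfies a functional equation of conductor exactly $N$, which are precisely the hypotheses of \cite[Theorem~9]{li}, so $f$ is primitive and (i) holds. For the Eisenstein case I would reuse the mechanism of that argument, which I recall here: if $f$ came from a newform---cuspidal or, below, Eisenstein---of level $M\mid N$ with $M<N$, then dividing the hypothesized functional equation of $f$ by the genuine functional equation of that newform exhibits a nontrivial product $D(s)$ of ``correction'' Euler factors, supported on the primes dividing $N/M$, satisfying $D(s)=c(N/M)^{\frac12-s}\overline{D(1-\bar s)}$ for some constant $c$; comparing the extreme Dirichlet coefficients forces each local correction factor to be itself a self-dual Dirichlet polynomial of the corresponding prime-power conductor, and in particular to have a zero of absolute value $\sqrt p$. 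In the cuspidal setting temperedness makes all the relevant Satake parameters have absolute value $1$, so this is impossible---the contradiction, of the same flavour as ``$q=-1$'' in the proof of Lemma~\ref{lem:nonvanishing}.

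So suppose $f$ is Eisenstein. By the standard theory of Eisenstein series---the analogue for $\mathcal{E}_k$ of the newform decomposition of $S_k$---the eigenform $f$ is obtained, up to a scalar, from a pair of primitive Dirichlet characters $\psi\pmod*{u}$ and $\varphi\pmod*{v}$ with $uv\mid N$ and $\psi\varphi$ inducing $\xi$, via the Eisenstein series $E_k^{\psi,\varphi}$ of $n$-th Fourier coefficient $\sum_{d\mid n}\psi(n/d)\varphi(d)d^{k-1}$ for $n\ge1$, followed by a choice at each prime $p\mid N/(uv)$ of how to degenerate the degree-two Euler factor at $p$---keeping one of the Satake parameters $\psi(p)$, $\varphi(p)p^{k-1}$, or neither---exactly as in the passage from newforms to oldforms. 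The series $E_k^{\psi,\varphi}$ has completed $L$-function $\Gamma_\C(s+\frac{k-1}2)L(s+\frac{k-1}2,\psi)L(s-\frac{k-1}2,\varphi)$ and, via the Fricke involution, satisfies a functional equation of conductor $uv$ relating it to $E_k^{\bar\varphi,\bar\psi}$; verifying this entails checking that $\Gamma_\C(s+\frac{k-1}2)$ is the correct archimedean factor for every $k\ge1$. Two facts then pin $f$ down. First, the functional equation in the lemma demands that $f$ be self-dual under complex conjugation, i.e.\ related to $\overline{f}=E_k^{\bar\psi,\bar\varphi}$ rather than to the Fricke-dual $E_k^{\bar\varphi,\bar\psi}$; these coincide automatically for $k=1$ but, for $k\ge2$, only when $\psi=\varphi$, so $\psi=\varphi$ is forced when $k\ge2$. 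Second, running the rigidity step, each correction factor must be self-dual of the corresponding prime-power conductor; after analytic normalization the two parameters at $p$ have absolute values $p^{-(k-1)/2}$ and $p^{(k-1)/2}$, so for $k\ne2$ no nontrivial degeneration survives, whence $uv=N$ and $f=E_k^{\psi,\varphi}$. Reading off the Fourier coefficients then gives (ii), with $\xi_1=\psi$ and $\xi_2=\varphi$ primitive (and equal when $k\ge3$).

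It remains to treat $k=2$. There the normalized parameters at $p$ have absolute values $p^{-1/2}$ and $p^{1/2}$, so the only self-dual degeneration is the removal of the Euler factor $1-\varphi(p)p^{\frac12-s}$, whose zero has absolute value $\sqrt p$; and it may be applied at most once at each prime, since removing two Euler factors at one prime would demand a degree-two self-dual correction there, which the parameter sizes $p^{\mp1/2}$ preclude. Translating these removals back to $q$-expansions identifies $f$ with $E_2^{\xi_1,\xi_2}$, where $\xi_1=\psi$ is primitive and $\xi_2$ is the imprimitive character obtained by inducing $\varphi$ to a modulus $N_2$ of conductor $N_2^*=v$, so that $N_2/N_2^*$ is squarefree and coprime to $N_2^*$; and since there is no holomorphic Eisenstein series of weight $2$ and level $1$, $\xi_2$ is forced to be imprimitive in the subcase that $\xi_1$ and $\xi_2$ are trivial. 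I expect the $k=2$ bookkeeping---checking that these, and no other, modifications survive the functional equation, and hence extracting the precise description of $\xi_2$---to be the main obstacle. Throughout, only the shape and the conductor of the functional equation enter, never the value of the root number, which is precisely why the lemma can be stated with an arbitrary $\epsilon$.
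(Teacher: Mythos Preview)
Your approach is essentially the paper's: split into cuspidal versus Eisenstein, invoke Li's theorem for the cuspidal case, and in the Eisenstein case write $L_f=D\cdot L_{\psi,\varphi}$ for a Dirichlet polynomial $D$ and analyse $D$ Euler factor by Euler factor via its functional equation, treating $k=2$ separately. The only organisational difference is that where you use the Hecke-stable decomposition $M_k=S_k\oplus\mathcal E_k$ together with the eigenform hypothesis to land immediately on a single pair $(\psi,\varphi)$, the paper instead writes $L_f$ as a general linear combination over \emph{all} primitive cuspforms and character pairs with Dirichlet-polynomial coefficients and then invokes the linear-independence theorem of Kaczorowski--Molteni--Perelli \cite{KMP} to isolate a single nonzero term; both routes arrive at the same point.

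One substantive remark: your self-duality observation---that the hypothesised functional equation relates $f$ to $\bar f=E_k^{\bar\psi,\bar\varphi}$, whereas the Fricke involution sends $E_k^{\psi,\varphi}$ to (a multiple of) $E_k^{\bar\varphi,\bar\psi}$, forcing $\psi=\varphi$ when $k\ge2$---is correct and is not made explicit in the paper. Indeed the paper writes down the self-dual functional equation \eqref{eq:Dfe} for $D_{\xi_1,\xi_2}$ as if it followed directly from ``a functional equation of level $N_1N_2$'' for $\Lambda_{\xi_1,\xi_2}$, but the natural functional equation of $\Lambda_{\xi_1,\xi_2}$ has $\Lambda_{\bar\xi_2,\bar\xi_1}$ on the right, not $\overline{\Lambda_{\xi_1,\xi_2}}=\Lambda_{\bar\xi_1,\bar\xi_2}$; your observation (or an equivalent appeal to inequivalence of $L_{\bar\xi_1,\bar\xi_2}$ and $L_{\bar\xi_2,\bar\xi_1}$) is precisely what bridges the two and justifies \eqref{eq:Dfe}. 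So on this point your sketch is actually more careful than the paper's, though the stated conclusion of the lemma is unaffected since it does not record the extra constraint $\xi_1=\xi_2$ for $k\ge2$.
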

\begin{proof}[Proof (sketch)]
Let $X$ denote the set of pairs $(\xi_1,\xi_2)$, where
$\xi_1\pmod*{N_1}$ and $\xi_2\pmod*{N_2}$ are primitive Dirichlet
characters such that $N_1N_2\mid N$, $\xi_1(-1)\xi_2(-1)=(-1)^k$ and
if $k=1$ then $\xi_1(-1)=1$.
To any pair $(\xi_1,\xi_2)\in X$ we associate
the $L$-series
$$
L_{\xi_1,\xi_2}(s)=L(s+\tfrac{k-1}2,\xi_1)L(s-\tfrac{k-1}2,\xi_2),
$$
where the factors on the right-hand side are the usual Dirichlet $L$-functions.

Next let $C$ denote the set of all primitive weight-$k$ cuspforms $g$
of conductor dividing $N$.
To $g\in C$ with Fourier expansion $\sum_{n=1}^\infty g_ne(nz)$
we associate the $L$-series
$$
L_g(s)=\sum_{n=1}^\infty g_nn^{-s-\frac{k-1}2}.
$$

Let $L_f(s)=\sum_{n=1}^\infty f_nn^{-s-\frac{k-1}2}$ denote the finite
$L$-series of $f$. Then by newform theory and the
description of Eisenstein series in \cite[\S4.7]{miyake},
there are Dirichlet polynomials $D_{\xi_1,\xi_2}$ and $D_g$ such that
\begin{equation}\label{eq:linear}
L_f(s)=\sum_{(\xi_1,\xi_2)\in X}D_{\xi_1,\xi_2}(s)L_{\xi_1,\xi_2}(s)
+\sum_{g\in C}D_g(s)L_g(s).
\end{equation}
Further, the coefficients of each Dirichlet polynomial are supported on
divisors of $N$.

Following \cite{KMP}, we will say that Euler products $L_1(s)$ and
$L_2(s)$ are \emph{equivalent} if their Euler factors agree for all but at
most finitely many primes, and \emph{inequivalent} otherwise. It follows
from the Rankin--Selberg method (see, e.g., \cite[Corollary~4.4]{bk-gln})
that the elements of
$\{L_{\xi_1,\xi_2}:(\xi_1,\xi_2)\in X\}\cup\{L_g:g\in C\}$
are pairwise inequivalent.  Combining this with \cite[Theorem~2]{KMP},
we see that the right-hand side of \eqref{eq:linear} has exactly one
nonzero term.  If the nonzero term corresponds to a cuspform $g\in C$
then $f$ is also cuspidal, and thus the conclusion follows from Li's
theorem \cite[Theorem~9]{li}.

Hence we may suppose that
$L_f(s)=D_{\xi_1,\xi_2}(s)L_{\xi_1,\xi_2}(s)$
for some pair $(\xi_1,\xi_2)\in X$. Since the function
$\Lambda_{\xi_1,\xi_2}(s)=\Gamma_\C(s+\frac{k-1}2)L_{\xi_1,\xi_2}(s)$
satisfies a functional equation of level $N_1N_2$,
$D_{\xi_1,\xi_2}(s)=\Lambda_f(s)/\Lambda_{\xi_1,\xi_2}(s)$
satisfies a functional equation of level $N/N_1N_2$, i.e.\
\begin{equation}\label{eq:Dfe}
D_{\xi_1,\xi_2}(s)=\epsilon_{\xi_1,\xi_2}
\left(\frac{N}{N_1N_2}\right)^{\frac12-s}
\overline{D_{\xi_1,\xi_2}(1-\bar{s})}
\end{equation}
for a suitable constant $\epsilon_{\xi_1,\xi_2}$.
On the other hand, since the coefficients of $D_{\xi_1,\xi_2}(s)$
are supported on divisors of $N$, from the Euler products for $L_f(s)$ and
$L_{\xi_1,\xi_2}(s)$ we have
$$
D_{\xi_1,\xi_2}(s)=\prod_{p\mid N}
\frac{(1-\xi_1(p)p^{-s-\frac{k-1}2})(1-\xi_2(p)p^{-s+\frac{k-1}2})}
{1-f_pp^{-s-\frac{k-1}2}}.
$$
This ratio must be entire, and by the functional equation \eqref{eq:Dfe},
its zeros are symmetric with respect to the line $\Re(s)=\frac12$.
By the $\Q$-linear independence of $\log{p}$ for primes $p$,
the same is true of each individual Euler factor.

Now, if $k\ne2$ then a straightforward case-by-case analysis shows that
this is only possible if
$$
\frac{(1-\xi_1(p)p^{-s-\frac{k-1}2})(1-\xi_2(p)p^{-s+\frac{k-1}2})}
{1-f_pp^{-s-\frac{k-1}2}}=1
$$
for each $p$, so that $D_{\xi_1,\xi_2}(s)=1$.
Thus, $L_f(s)=L_{\xi_1,\xi_2}(s)$,
and by the functional equation \eqref{eq:Dfe} we have
$N=N_1N_2$. This yields the desired conclusion for $k\ne2$.

If $k=2$ then, since the zeros of
$1-\xi_2(p)p^{-s+\frac12}$ lie on the line $\Re(s)=\frac12$,
there are two possibilities for each $p$:
\begin{itemize}
\item[(i)]
$\displaystyle
\frac{(1-\xi_1(p)p^{-s-\frac12})(1-\xi_2(p)p^{-s+\frac12})}
{1-f_pp^{-s-\frac12}}=1;
$
\item[(ii)]
$\displaystyle
\xi_2(p)\ne0\quad\text{and}\quad
\frac{(1-\xi_1(p)p^{-s-\frac12})(1-\xi_2(p)p^{-s+\frac12})}
{1-f_pp^{-s-\frac12}}=1-\xi_2(p)p^{-s+\frac12}.
$
\end{itemize}
Let $S$ denote the set of $p\mid N$ for which case (ii) applies.
Then we have
$$
D_{\xi_1,\xi_2}(s)=\prod_{p\in S}(1-\xi_2(p)p^{-s+\frac12}).
$$
For each $p\in S$, note that
$1-\xi_2(p)p^{-s+\frac12}$ satisfies a functional equation
of level $p$. Comparing with \eqref{eq:Dfe}, we see that
$N/N_1N_2=\prod_{p\in S}p$. Moreover, since $\xi_2(p)\ne0$ for each
$p\in S$, we have $(N/N_1N_2,N_2)=1$.
Replacing $\xi_2$ by the character of modulus $N/N_1$ that it induces,
we get the conclusion of the lemma.
\end{proof}

\section{Proof of Theorem~\ref{thm1}}
We first apply Lemma~\ref{lem:bk} to constrain the poles of
$\Lambda_{\mathbf{1}}(s)$ and $\Lambda_\chi(s)$ for primitive characters
$\chi$ of prime conductor $q\nmid N$. When $k\le2$ we suppose for now
that $\Lambda_{\mathbf{1}}(s)$ is entire, and return to the general
case below. Thus, both $\Lambda_{\mathbf{1}}(s)$ and $\Lambda_\chi(s)$
are entire of finite order. By the Phragm\'en--Lindel\"of convexity
principle, they are bounded in vertical strips.

Let $\H=\{z\in\C:\Im(z)>0\}$ denote the upper half-plane.
For $z\in\H$, set
$$
f_n=\lambda_nn^{\frac{k-1}2},
\quad
f(z)=\sum_{n=1}^\infty f_ne(nz)
\quad\text{and}\quad
\bar{f}(z)=\sum_{n=1}^\infty\overline{f_n}e(nz).
$$
For any function $g:\H\to\C$ and any
matrix
$\gamma=\begin{psmallmatrix}a&b\\c&d\end{psmallmatrix}\in\GL_2(\R)$
of positive determinant, let $g|\gamma$ denote the function
$$
(g|\gamma)(z)=(\det\gamma)^{k/2}(cz+d)^{-k}g\!\left(\frac{az+b}{cz+d}\right).
$$
Then, by Hecke's argument \cite[Theorem~4.3.5]{miyake},
the functional equation \eqref{eq:fe} for
$\chi=\mathbf{1}$ implies that
$f|\begin{psmallmatrix}&-1\\N&\end{psmallmatrix}=
i^k\epsilon_{\mathbf{1}}\bar{f}$.
Note that
$$
\begin{pmatrix}1&\\N&1\end{pmatrix}
=\begin{pmatrix}&-1\\N\end{pmatrix}
\begin{pmatrix}1&1\\&1\end{pmatrix}^{-1}
\begin{pmatrix}&-1\\N\end{pmatrix}^{-1}.
$$
Since $f$ and $\bar{f}$ are Fourier series, it follows that
$f|\begin{psmallmatrix}1&1\\&1\end{psmallmatrix}=f$
and
$f|\begin{psmallmatrix}1&\\N&1\end{psmallmatrix}=f$.

If $\gamma,\gamma'\in\Gamma_0(N)$ have the same top row, then a
computation shows that $\gamma'\gamma^{-1}$ is a power of
$\begin{psmallmatrix}1&\\N&1\end{psmallmatrix}$, so that
$f|\gamma'=f|\gamma$. Thus, $f|\gamma$ depends only on the top row of
$\gamma$. With this in mind, we will write $\gamma_{q,a}$ to denote any
element of $\Gamma_0(N)$ with top row
$\begin{psmallmatrix}q&-a\end{psmallmatrix}$.

Let $q$ be a prime not dividing $N$, and let $\chi$ be a character
modulo $q$, not necessarily primitive. Define
$$
f_\chi=\sum_{\substack{a\pmod*{q}\\(a,q)=1}}\overline{\chi(a)}
f\left|\begin{pmatrix}1&a/q\\&1\end{pmatrix}\right.
\quad\text{and}\quad
\bar{f}_{\overline{\chi}}=\sum_{\substack{a\pmod*{q}\\(a,q)=1}}\chi(a)
\bar{f}\left|\begin{pmatrix}1&a/q\\&1\end{pmatrix}\right..
$$
Substituting the Fourier expansion for $f$, we see that $f_\chi$ has a
Fourier expansion with coefficients
$$
f_n\sum_{\substack{a\pmod*{q}\\(a,q)=1}}
\overline{\chi(a)}e\!\left(\frac{an}{q}\right)
=f_n\begin{cases}
c_q(n)&\text{if }\chi\text{ is trivial},\\
\tau(\overline{\chi})\chi(n)&\text{otherwise},
\end{cases}
$$
and similarly for $\bar{f}_{\overline{\chi}}$.
Set
$$
C_\chi=\begin{cases}
\overline{\xi(q)}&\text{if }\chi\text{ is trivial},\\
\chi(-N)\epsilon_{\mathbf{1}}\overline{\epsilon_\chi
\tau(\overline{\chi})/\tau(\chi)}&\text{otherwise}.
\end{cases}
$$
Then, by \eqref{eq:fe}, Lemma~\ref{lem:cq} and Hecke's argument, we have
$f_\chi|\begin{psmallmatrix}&-1\\Nq^2&\end{psmallmatrix}=
i^k\chi(-N)\epsilon_{\mathbf{1}}\overline{C_\chi}\bar{f}_{\overline{\chi}}$.

Suppose that $a$ and $m$ are integers satisfying
$Nam\equiv-1\pmod*{q}$. Then
$$
\gamma_{q,a}=q
\begin{pmatrix}&-1\\N&\end{pmatrix}
\begin{pmatrix}1&m/q\\&1\end{pmatrix}
\begin{pmatrix}&-1\\Nq^2&\end{pmatrix}^{-1}
\begin{pmatrix}1&a/q\\&1\end{pmatrix}^{-1}
=\begin{pmatrix}q&-a\\-Nm&\frac{Nam+1}q\end{pmatrix}
$$
is an element of $\Gamma_0(N)$ with top row
$\begin{psmallmatrix}q&-a\end{psmallmatrix}$.
Thus, we have
\begin{equation}\label{eq:main}
\begin{aligned}
\sum_{\substack{a\pmod*{q}\\(a,q)=1}}C_\chi\overline{\chi(a)}
f\left|\begin{pmatrix}1&a/q\\&1\end{pmatrix}\right.
&=C_\chi f_\chi=i^k\chi(-N)\epsilon_{\mathbf{1}}\bar{f}_{\overline{\chi}}\left|
\begin{pmatrix}&-1\\Nq^2&\end{pmatrix}^{-1}\right.\\
&=i^k\epsilon_{\mathbf{1}}
\sum_{\substack{m\pmod*{q}\\(m,q)=1}}\chi(-Nm)
\bar{f}\left|\begin{pmatrix}1&m/q\\&1\end{pmatrix}
\begin{pmatrix}&-1\\Nq^2&\end{pmatrix}^{-1}\right.\\
&=\sum_{\substack{m\pmod*{q}\\(m,q)=1}}\chi(-Nm)
f\left|\begin{pmatrix}&-1\\N&\end{pmatrix}
\begin{pmatrix}1&m/q\\&1\end{pmatrix}
\begin{pmatrix}&-1\\Nq^2&\end{pmatrix}^{-1}\right.\\
&=\sum_{\substack{a\pmod*{q}\\(a,q)=1}}\overline{\chi(a)}
f\left|\gamma_{q,a}
\begin{pmatrix}1&a/q\\&1\end{pmatrix}\right..
\end{aligned}
\end{equation}

Fix a residue $b$ coprime to $q$. Multiplying both sides by
$\chi(b)$ and averaging over $\chi$, we obtain
$$
f\left|\gamma_{q,b}\begin{pmatrix}1&b/q\\&1\end{pmatrix}\right.
=\frac1{\varphi(q)}\sum_{\chi\pmod*{q}}\chi(b)
\sum_{\substack{a\pmod*{q}\\(a,q)=1}}C_\chi\overline{\chi(a)}
f\left|\begin{pmatrix}1&a/q\\&1\end{pmatrix}\right..
$$
Replacing $a$ by $ab$ on the right-hand side, writing
$$
\widehat{C}_q(a)=\frac1{\varphi(q)}\sum_{\chi\pmod*{q}}C_\chi\overline{\chi(a)}
$$
and applying $\begin{psmallmatrix}1&-b/q\\&1\end{psmallmatrix}$
on the right, we obtain
$$
f|\gamma_{q,b}=\sum_{\substack{a\pmod*{q}\\(a,q)=1}}\widehat{C}_q(a)
f\left|\begin{pmatrix}1&(a-1)b/q\\&1\end{pmatrix}\right..
$$
From this we see that $f|\gamma_{q,b}$
has a Fourier expansion, with Fourier coefficients $f_nS_q(bn)$, where
$$
S_q(x)=\sum_{\substack{a\pmod*{q}\\(a,q)=1}}\widehat{C}_q(a)
e\!\left(\frac{(a-1)x}{q}\right).
$$

Now, let $\gamma=\begin{psmallmatrix}q&-b\\-Nm&r\end{psmallmatrix}$ be an
arbitrary element of $\Gamma_1(N)$. If $m=0$ then $\gamma$ is (up to sign,
if $N\le2$) a power of $\begin{psmallmatrix}1&1\\&1\end{psmallmatrix}$,
so that $f|\gamma=f$. Otherwise, multiplying $\gamma$ on the left
by $\begin{psmallmatrix}1&1\\&1\end{psmallmatrix}^{-j}$ leaves
$f|\gamma$ unchanged and replaces $q$ by $q+jmN$. By Dirichlet's
theorem, we may assume without loss of generality that $q$ is prime.
Since $q\equiv1\pmod{N}$, we have
$$
\begin{pmatrix}q&-1\\1-q&1\end{pmatrix}
=\begin{pmatrix}1&1\\&1\end{pmatrix}^{-1}
\begin{pmatrix}1&\\N&1\end{pmatrix}^{\frac{1-q}{N}},
$$
so that $f|\gamma_{q,1}=f$.
Given any residue $x\pmod*{q}$, by Lemma~\ref{lem:cq} we may choose
$n$ such that $n\equiv x\pmod*{q}$ and $f_n\ne0$. Equating Fourier
coefficients of $f|\gamma_{q,1}$ and $f$, it follows that $S_q(x)=1$.
In turn, this implies that $f|\gamma_{q,b}=f$, and
thus $f|\gamma=f$ for all $\gamma\in\Gamma_1(N)$.

Next consider an arbitrary $\gamma=\gamma_{q,b}\in\Gamma_0(N)$.
As above, we may assume that $q$ is prime.
Moreover, for any $a$ coprime to $q$, we have
$\gamma_{q,a}\gamma^{-1}\in\Gamma_1(N)$,
so that $f|\gamma_{q,a}=f|\gamma$.
Taking $\chi$ equal to the trivial character mod $q$ in
\eqref{eq:main}, we thus find that
\begin{equation}\label{eq:fgamma}
\sum_{\substack{a\pmod*{q}\\(a,q)=1}}
\bigl(f|\gamma-\overline{\xi(q)}f\bigr)
\left|\begin{pmatrix}1&a/q\\&1\end{pmatrix}\right.
=0.
\end{equation}
We showed above that $f|\gamma$ has a Fourier expansion.
Writing $a_n$ for the Fourier coefficients, \eqref{eq:fgamma} implies
that $(a_n-\overline{\xi(q)}f_n)c_q(n)=0$ for every $n$.
Since $c_q(n)$ never vanishes, we have $a_n=\overline{\xi(q)}f_n$,
so that $f|\gamma=\overline{\xi(q)}f$.
Thus, we have shown that $f\in M_k(\Gamma_0(N),\xi)$.

Next, by Lemma~\ref{lem:newform}, either $f$ is a primitive
cuspform or there are Dirichlet characters $\xi_1\pmod*{N_1}$
and $\xi_2\pmod*{N_2}$ such that $N_1N_2=N$, $\xi_1\xi_2=\xi$ and
\begin{equation}\label{eq:eisenstein}
f_n=\sum_{d\mid n}\xi_1(n/d)\xi_2(d)d^{k-1}
\end{equation}
for every $n$.
For $k\ge2$, we consider \eqref{eq:eisenstein} at $n=q_1\cdots q_m$,
where $q_1,\ldots,q_m$ are the $m$ smallest primes $\equiv1\pmod*{N}$.
For this $n$ we see that
$$
\lambda_n=f_nn^{-\frac{k-1}2}
=\prod_{i=1}^m\Bigl(q_i^{\frac{k-1}2}+q_i^{-\frac{k-1}2}\Bigr)
\ge\prod_{i=1}^m\Bigl(q_i^{\frac12}+q_i^{-\frac12}\Bigr),
$$
so that
$$
\frac{\lambda_n}{\sqrt{n}}\ge\prod_{i=1}^m\bigl(1+q_i^{-1}\bigr).
$$
By Dirichlet's theorem, the right-hand side grows without bound as
$m\to\infty$. This contradicts the hypothesis that $\lambda_n=O(\sqrt{n})$,
so $f$ must be a primitive cusp form. When $k=1$, $f$ need not be
cuspidal, but in this case Lemma~\ref{lem:newform} asserts that $\xi_1$
and $\xi_2$ are primitive. Thus, we have verified the conclusion of
Theorem~\ref{thm1}.

It remains only to handle the possibility that $\Lambda_{\mathbf{1}}(s)$
has poles when $k\le2$. In this case we fix an odd prime $q\nmid N$
and a primitive character $\chi\pmod*{q}$, and consider the sequence
$\lambda_n'=\lambda_n\chi(n)$ in place of $\lambda_n$,
$\xi\chi^2$ in place of $\xi$ and $Nq^2$ in place of $N$.
Then all of the hypotheses
of Theorem~\ref{thm1} are satisfied for these data,
and the associated $L$-function $\Lambda_{\mathbf{1}}(s)$
is entire.  Thus,
by what we have already shown, either there is a primitive cuspform
$f'\in S_k^{\rm new}(\Gamma_0(Nq^2),\xi\chi^2)$ with Fourier
coefficients $\lambda_n'n^{\frac{k-1}2}$, or $k=1$ and there are primitive
characters $\xi_1'$ and $\xi_2'$ such that
$\lambda_n'=\sum_{d\mid n}\xi_1'(n/d)\xi_2'(d)$.

Consider the cuspidal case first. By newform theory
\cite[Theorem~3.2]{atkin-li}, we can twist $f'$ by $\overline{\chi}$,
i.e.\ there is a primitive cuspform $f$ of conductor
$Nq^j$ for some $j$, with Fourier coefficients
$\lambda_n'\overline{\chi(n)}n^{\frac{k-1}2}=\lambda_nn^{\frac{k-1}2}$
for every $n$
coprime to $q$. Since $q$ was arbitrary, we can apply this argument to two
different choices of $q$. Then strong multiplicity one implies that $f$ has
conductor $N$ and Fourier coefficients $\lambda_nn^{\frac{k-1}2}$ for every
$n$, as desired.

In the non-cuspidal case, let $\xi_i\pmod{N_i}$ ($i=1,2$)
be the primitive character inducing $\xi_i'\overline{\chi}$.  Then as
above we find that $\lambda_n=\sum_{d\mid n}\xi_1(n/d)\xi_2(d)$ for all
$n$ coprime to $q$. In particular,
\begin{equation}\label{eq:k1}
\lambda_p=\xi_1(p)+\xi_2(p)
\quad\text{for all sufficiently large primes }p.
\end{equation}
Note that $\xi_1$ and $\xi_2$ have opposite parity. If we normalize
$\xi_1$ to be even then, since $\xi_1$ and $\xi_2$ are primitive,
Dirichlet's theorem implies that they are uniquely determined by
\eqref{eq:k1}. Hence, using two choices for $q$,
we see that $N_1N_2=N$ and $\lambda_n=\sum_{d\mid n}\xi_1(n/d)\xi_2(d)$
for all $n$. This completes the proof.

\bibliographystyle{amsplain}
\bibliography{rootnumber}

\providecommand{\bysame}{\leavevmode\hbox to3em{\hrulefill}\thinspace}
\providecommand{\MR}{\relax\ifhmode\unskip\space\fi MR }
\providecommand{\MRhref}[2]{%
  \href{http://www.ams.org/mathscinet-getitem?mr=#1}{#2}
}
\providecommand{\href}[2]{#2}
\begin{thebibliography}{10}

\bibitem{atkin-li}
A.~O.~L. Atkin and Wen Ch'ing~Winnie Li, \emph{Twists of newforms and
  pseudo-eigenvalues of {$W$}-operators}, Invent. Math. \textbf{48} (1978),
  no.~3, 221--243. \MR{508986}

\bibitem{workshop}
Sandro Bettin, Jonathan~W. Bober, Andrew~R. Booker, Brian Conrey, Min Lee,
  Giuseppe Molteni, Thomas Oliver, David~J. Platt, and Raphael~S. Steiner,
  \emph{A conjectural extension of {H}ecke's converse theorem}, Ramanujan J.
  \textbf{47} (2018), no.~3, 659--684. \MR{3874812}

\bibitem{booker}
Andrew~R. Booker, \emph{{$L$}-functions as distributions}, Math. Ann.
  \textbf{363} (2015), no.~1-2, 423--454. \MR{3394385}

\bibitem{bk1}
Andrew~R. Booker and M.~Krishnamurthy, \emph{A strengthening of the {${\rm
  GL}(2)$} converse theorem}, Compos. Math. \textbf{147} (2011), no.~3,
  669--715. \MR{2801397}

\bibitem{bk2}
\bysame, \emph{Further refinements of the {${\rm GL}(2)$} converse theorem},
  Bull. Lond. Math. Soc. \textbf{45} (2013), no.~5, 987--1003. \MR{3104990}

\bibitem{bk-classical}
\bysame, \emph{Weil's converse theorem with poles}, Int. Math. Res. Not. IMRN
  (2014), no.~19, 5328--5339. \MR{3267373}

\bibitem{bk-gln}
\bysame, \emph{A converse theorem for {${\rm GL}(n)$}}, Adv. Math. \textbf{296}
  (2016), 154--180. \MR{3490766}

\bibitem{cst}
J.~W. Cogdell, F.~Shahidi, and T.-L. Tsai, \emph{On stability of root numbers},
  Automorphic forms and related geometry: assessing the legacy of {I}. {I}.
  {P}iatetski-{S}hapiro, Contemp. Math., vol. 614, Amer. Math. Soc.,
  Providence, RI, 2014, pp.~375--386. \MR{3220935}

\bibitem{cf}
J.~B. Conrey and D.~W. Farmer, \emph{An extension of {H}ecke's converse
  theorem}, Internat. Math. Res. Notices (1995), no.~9, 445--463. \MR{1360623}

\bibitem{deligne}
P.~Deligne, \emph{Les constantes des \'equations fonctionnelles des fonctions
  {$L$}},  (1973), 501--597. Lecture Notes in Math., Vol. 349. \MR{0349635}

\bibitem{diaconu}
A.~Diaconu, A.~Perelli, and A.~Zaharescu, \emph{A note on {$\rm GL_2$} converse
  theorems}, C. R. Math. Acad. Sci. Paris \textbf{334} (2002), no.~8, 621--624.
  \MR{1903358 (2003f:11066)}

\bibitem{JL}
H.~Jacquet and R.~P. Langlands, \emph{Automorphic forms on {${\rm GL}(2)$}},
  Lecture Notes in Mathematics, Vol. 114, Springer-Verlag, Berlin, 1970.
  \MR{0401654 (53 \#5481)}

\bibitem{KMP}
J.~Kaczorowski, G.~Molteni, and A.~Perelli, \emph{Linear independence in the
  {S}elberg class}, C. R. Math. Acad. Sci. Soc. R. Can. \textbf{21} (1999),
  no.~1, 28--32. \MR{1669479 (2000h:11094)}

\bibitem{langlands-comments2}
Robert~P. Langlands, \emph{Author's comments on ``{C}orrespondence leading to
  the book written with {J}acquet''},
  \url{http://publications.ias.edu/rpl/paper/53}.

\bibitem{langlands-comments1}
\bysame, \emph{Author's comments on ``{P}roblems in the theory of automorphic
  forms''}, \url{http://publications.ias.edu/rpl/paper/47}.

\bibitem{langlands-artin}
\bysame, \emph{On the functional equation of the {A}rtin ${L}$-functions},
  \url{http://publications.ias.edu/sites/default/files/a-ps.pdf}.

\bibitem{li}
Wen Ch'ing~Winnie Li, \emph{Newforms and functional equations}, Math. Ann.
  \textbf{212} (1975), 285--315. \MR{0369263}

\bibitem{miyake}
Toshitsune Miyake, \emph{Modular forms}, {E}nglish ed., Springer Monographs in
  Mathematics, Springer-Verlag, Berlin, 2006, Translated from the 1976 Japanese
  original by Yoshitaka Maeda. \MR{2194815 (2006g:11084)}

\bibitem{steiner}
Raphael~S. Steiner, \emph{Near counterexamples to {W}eil's converse theorem},
  2017, \url{https://arxiv.org/abs/1606.06923}.

\bibitem{weil}
Andr{\'e} Weil, \emph{\"{U}ber die {B}estimmung {D}irichletscher {R}eihen durch
  {F}unktionalgleichungen}, Math. Ann. \textbf{168} (1967), 149--156.
  \MR{0207658 (34 \#7473)}

\end{thebibliography}
\end{document}